\documentclass[a4paper,11pt,reqno]{amsart}
\usepackage[normalem]{ulem}

\usepackage[text={400pt,660pt},centering]{geometry}

\usepackage{etoolbox}
\makeatletter
\patchcmd{\@setauthors}{\MakeUppercase}{}{}{}

\makeatother
\usepackage{amsthm, amssymb, amsmath, amsfonts, mathrsfs}
\usepackage{mathtools}
\usepackage{scalerel} 
\usepackage[colorlinks=true, pdfstartview=FitV, linkcolor=blue, citecolor=blue, urlcolor=blue,pagebackref=false]{hyperref}

\usepackage{esint} % contains strokedint
\usepackage{MnSymbol} % contains notin inversed (which is \nowns)

\usepackage{enumerate}

\usepackage{graphicx}

\usepackage{microtype}

\usepackage{enumitem}
\setlist[enumerate,1]{label=(\alph*), font=\normalfont\bfseries}

\usepackage{tikz} % for tikzpicture
\usetikzlibrary{shapes,arrows,positioning,calc}

\tikzstyle{result} = [rectangle, 
minimum width=4cm, 
minimum height=1cm,  
text width=4cm, 
align=flush center,
font = \footnotesize, 
draw=black]

\tikzstyle{input} = [rectangle, 
minimum width=3cm, 
minimum height=1cm, 
text width=3cm, 
align=flush center,
font = \footnotesize,
draw=black]

\tikzstyle{arrow} = [thick, ->,>=stealth]

 \makeatletter
\newcommand*\bigcdot{\mathpalette\bigcdot@{.5}}
\newcommand*\bigcdot@[2]{\mathbin{\vcenter{\hbox{\scalebox{#2}{$\m@th#1\bullet$}}}}}
\makeatother

\newtheorem{proposition}{Proposition}
\newtheorem{theorem}[proposition]{Theorem}
\newtheorem{lemma}[proposition]{Lemma}
\newtheorem{corollary}[proposition]{Corollary}

\theoremstyle{remark}
\newtheorem{remark}[proposition]{Remark}

\theoremstyle{definition}

\numberwithin{equation}{section}
\numberwithin{proposition}{section}

\renewcommand{\le}{\leqslant}
\renewcommand{\ge}{\geqslant}
\renewcommand{\leq}{\leqslant}
\renewcommand{\geq}{\geqslant}

\renewcommand{\subset}{\subseteq}

\newcommand{\F}{\mathcal{F}}

\newcommand{\E}{\mathbb{E}}

\newcommand{\R}{\mathbb{R}}

\renewcommand{\P}{\mathbb{P}}

\renewcommand{\tilde}{\widetilde}

\newcommand{\ep}{\varepsilon}
\renewcommand{\d}{{\mathrm{d}}}

\renewcommand{\epsilon}{\varepsilon}

\renewcommand{\div}{\mathop{\rm div}}
\DeclareMathOperator*{\argmax}{argmax}
\DeclareMathOperator*{\osc}{osc}

\renewcommand{\fint}{\strokedint}
\newcommand{\Rd}{{\mathbb{R}^d}}

\numberwithin{equation}{section}

\allowdisplaybreaks

\newcommand{\vertiii}[1]{{\left\vert\kern-0.25ex\left\vert\kern-0.25ex\left\vert #1 
	\right\vert\kern-0.25ex\right\vert\kern-0.25ex\right\vert}}

\title[]{ H\MakeLowercase{arnack inequality for}  $p$\MakeLowercase{-harmonic functions: \\ improved dimension dependence via tug of war}}
\author[Y\MakeLowercase{uval} P\MakeLowercase{eres and }H\MakeLowercase{an} W\MakeLowercase{ang}]{Yuval Peres and Han Wang}

\address{Y. Peres, Beijing Institute of Mathematical Sciences and Applications, Beijing, China}
\email{yperes@bimsa.cn}
\address{H. Wang, Qiuzhen College, Tsinghua University, Beijing, China}
\email{wanghan21@mails.tsinghua.edu.cn}

\begin{document}

%\begin{nouppercase}
\maketitle
%\end{nouppercase}

\begin{abstract}
Let $p>1$. The Harnack inequality and H\"older continuity for $p$-harmonic functions in bounded domains in $\R^d$ are usually proved via Moser iteration. In 2013, Luiro, Parviainen and Saksman showed that  tug-of-war games can also be used to derive these inequalities. 
We refine their analysis and obtain improved dependence on $p$ and the dimension $d$ by probabilistic methods. 
In particular, we show that for all $p>1$, the constant in Harnack's inequality is $O(\exp(C_p d\log d))$ as $d\rightarrow\infty$, which improves the constant derived from Moser iteration. 

\bigskip

\noindent \textsc{Keywords:} $p$-harmonic functions, Harnack inequality, tug-of-war with noise

 %We prove the H\"older continuity of $p$-harmonic functions in bounded domains in $\R^d$ for $p>1$ via the tug of war with noise game. This implies the Harnack inequality of $p$-harmonic functions. We give the exact dependence on $p$ and the dimension $d$ of the H\"older constant and the Harnack constant. In particular, we show the Harnack constant is $O(\exp(C_pd\log d))$ as $d\rightarrow\infty$, which is an improvement of the constant from Moser iteration. 
\end{abstract}
\section{Introduction}
 Harnack's inequality~\cite{harnack1887grundlagen}, extended by Serrin~\cite{Serrin1955}, is a key tool in the regularity theory of differential equations.   
%relating the values of a positive harmonic function at two points. The inequality is proved for solutions of general second order elliptic equations in \cite{Serrin1955}. 
Moser~\cite{Moser1961,Moser1964} developed an iteration method to  prove  the Harnack inequality and   
%The Moser iteration method has become a fundamental method to derive the Harnack inequality for general equations. In particular, 
Trudinger~\cite{Trudinger1967}  used  this method to prove Harnack's inequality for a class of quasilinear elliptic equations that includes the $p$-Laplace equation
\begin{equation}\label{plaplace}
    \Delta_p u:=\div(|\nabla u|^{p-2}\nabla u)=0.
\end{equation}

In 2008, Sheffield and Peres~\cite{PS08}
developed an approach to the $p$-Laplace equation via stochastic games called tug-of-war with noise. 
In 2013, Luiro, Parviainen and Saksman~\cite{LPSharnack} used this approach  to give a new proof of H\"older continuity and the Harnack inequality for $p$-harmonic functions when $p>2$. In \cite{LPregular} this was   extended
to a larger class of functions, 
%show H\"older continuity of functions satisfying a dynamical programming principle, 
which include $p$-harmonic functions for $p>1$. In \cite{Lipschitzvaryingprobability}, the method is used to prove the Lipschitz continuity for $p(x)$-harmonic functions. In \cite{gamevalueLip}, a regularized tug-of-war game is constructed to provide a Lipschitz continuous game value.

We present a probabilistic proof of the Harnack inequality for $p$-harmonic functions for $p>1$, which improves the method in \cite{LPSharnack,LPregular}. %The key ingredient is   an interior H\"older estimate for $p$-harmonic functions via tug-of-war. 
We obtain better dependence on $p$ and the dimension $d$. 
%of the constants in the H\"older and Harnack inequalities. 

%In particular, our result will give a Harnack constant of $O(C_pd\log d)$ as $d\rightarrow\infty$, which is better than the constant given by the Moser iteration. This comparison will be discussed in Section 3.

Let $B_r(x):=  \{y\in\Rd,\ |y-x|<r\}$ and for any function $f:U\rightarrow\R$,   define the oscillation  
\begin{equation}
    \osc(f,U):=\sup_Uf-\inf_U f.
\end{equation}
Our main result is the following theorem. Throughout,   $p$-harmonic functions are  viscosity solutions of \eqref{plaplace}, which are also weak solutions by \cite{equivalenceviscocityweak}, 
see~\cite{Lindqvistbook,HKMbooknonlinearpotentialtheory} for background.

\begin{theorem}[Harnack inequality]\label{thm.harnack}
        Let $u$ be a positive $p$-harmonic function on $B_R(0)$ where $R>1$. Then $u$ satisfies the Harnack inequality:
        \begin{equation}\label{eq.harnack}
            \sup_{B_1(0)}u\leq H_R(p,d)  u(0),
        \end{equation}
        where $\displaystyle H_R(p,d) \le \exp\left( C_R \, \frac{d}{p-1}\log\frac{d}{p-1}\right)$ and $C_R$ depends only on $R$. 
    \end{theorem}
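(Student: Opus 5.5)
The plan is to derive the Harnack inequality from an interior oscillation (Hölder) estimate obtained via tug-of-war with noise, and then run a chaining/covering argument. There are three steps.

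\textbf{Step 1: oscillation decay from the game.} The core estimate I aim for is an oscillation decay of the form
\begin{equation*}
\osc(u,B_{r}(x))\le \bigl(1-\theta\bigr)\,\osc(u,B_{4r}(x)),\qquad \theta\ge \exp\Bigl(-C\,\tfrac{d}{p-1}\log\tfrac{d}{p-1}\Bigr).
\end{equation*}
This should hold for $p$-harmonic $u$, with $C$ absolute. I would represent $u$ as a limit of values $u_\ep$ of tug-of-war with noise. In each step, with probability $\alpha$ a fair coin decides which player moves by $\ep$. With probability $\beta$ the token moves uniformly in $B_\ep$. Here $\alpha/\beta \asymp (p-2)/d$ for $p\ge2$, and for $1<p<2$ one uses the Luiro--Parviainen version with noise on lower-dimensional balls.

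Following \cite{LPSharnack,LPregular}, I compare $u_\ep(x)$ and $u_\ep(y)$ by a coupling of two game positions. One player in each copy uses a strategy pulling the two tokens together, and the noise is coupled by reflection. This makes $|x_k-y_k|$ behave like a supermartingale with a drift toward $0$. The quantity to control is the probability that the two tokens coalesce (or come within distance $\ep$) before exiting $B_{4r}$. The noise is a sum of $d$-dimensional increments whose projection onto the separation direction has variance about $\ep^2/d$. A comparison with a one-dimensional Bessel-type process of effective dimension about $d/(p-1)$ should therefore show that this probability is at least $(c\,(p-1)/d)^{c d/(p-1)}$. This is the step where the improved dependence enters, replacing the cruder exponential bounds of earlier work.

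\textbf{Step 2: weak Harnack via positivity.} Since $u>0$, applying Step 1 to $u$ alone only yields Hölder continuity. For Harnack I would additionally use a "measure-to-pointwise" lemma from the game. Let $x$ be such that $u(x)$ is large relative to $u(0)$. Playing the game from $0$, with probability at least $\theta$ the token reaches a small ball around $x$ in which, by Hölder continuity from Step 1, $u\ge u(x)/2$. Then $u(0)\ge \theta\,u(x)/2$ by the (sub)martingale property of the game value under a fixed strategy of the minimizing player, using $u\ge0$ elsewhere.

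\textbf{Step 3: chaining.} For $R>1$, I connect $0$ to any point of $B_1(0)$ through $N=O(1/(R-1))$ balls of radius comparable to $\min(1,R-1)$, each contained in $B_R$. Iterating Step 2 $N$ times gives $H_R\le (2/\theta)^{N}$, which has the stated form with $C_R\asymp C/\min(1,R-1)$.

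The main obstacle is Step 1 for $p$ near $1$ and for $p<2$, where the effective noise is weak. There I would first try the reflection coupling combined with an explicit one-dimensional radial supersolution $f(t)=t^{-\gamma}$ with $\gamma\asymp d/(p-1)$. Its one-step expectation under the coupled dynamics should be controlled exactly by the $\alpha,\beta$ bookkeeping, and optional stopping then gives the hitting probability.
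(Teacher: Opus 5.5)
Your Step~2 has a genuine gap, and your Step~1 targets the wrong kind of estimate.

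Hölder continuity at $x$ only gives $\osc(u,B_\rho(x))\le A(\rho/r)^\gamma\osc(u,B_r(x))$. The right-hand side is controlled by $\sup_{B_r(x)}u$, which you cannot compare with $u(x)$ without already having a Harnack inequality. (Taking $x$ to maximize $u$ over $\overline{B_1(0)}$ does not help, since $B_r(x)$ sticks out of $B_1(0)$.) So the radius $\rho$ of the ball on which $u\ge u(x)/2$ has no a priori lower bound. The probability that the token started at $0$ reaches $B_\rho(x)$ therefore has no fixed lower bound either: for $p<d$ it decays like $\rho^{\lambda}$ with $\lambda=\frac{d-p}{p-1}$, which is the game version of comparison with the fundamental solution. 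Calling this probability $\theta$ conflates it with the coalescence probability of the coupled pair in Step~1, which is a different quantity. Suppose you even granted yourself $\osc(u,B_{4r}(x))\le K\,u(x)$. The modulus your Step~1 supplies would force $\rho\approx 4^{-\log(2K)/\theta}\,r$, and hence a hitting probability like $\exp(-c\lambda\log(2K)/\theta)$, which is doubly exponential in $\frac{d}{p-1}$.

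More generally, a fixed-ratio decay with $\theta=\exp(-C\frac{d}{p-1}\log\frac{d}{p-1})$ is a Hölder estimate with exponent $\gamma\approx\theta/\log 4$. Feeding that into a Hölder-to-Harnack argument such as Lemma~\ref{lem.holdertoharnack} costs $\exp(c\lambda/\gamma)$, again a double exponential. What is needed, and what Proposition~\ref{thm.holder} provides, is $\osc(u,B_\delta)\le\frac{10^4 d}{p-1}\delta^{1/5}\osc(u,B_1)$: an absolute exponent with a polynomial prefactor. The paper gets it from the supermartingale $|X_n|^2+|Y_n|^2+C|Z_n|^{2\beta}$ with $\beta=\frac1{10}$ and $C=6000\bigl(\frac{d-1}{p-1}+1\bigr)$:
\begin{itemize}
\item In the aligned case, the noises are coupled by rotations so that the transverse part of the increment (the part pushing $|Z_n|$ outward, your Bessel effect) carries at most a quarter of its variance. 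Concavity then makes $|Z_n|^{2\beta}$ drop by at least $\ep^2/40$ per step.
\item In the unaligned case, the counter-strategies give a first-order drift toward coalescence.
\end{itemize}
Optional stopping then shows that \emph{failure} to coalesce has probability at most $(C+2)\delta^{2\beta}$. This is a polynomial bound on failure, not an exponentially small bound on success.

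The idea that breaks the circularity is Lemma~\ref{lem.holdertoharnack}. With $u(0)=1$, comparison with $|x-z|^{-\lambda}$ gives $\inf_{B_r(z)}u\le 2^{\lambda}r^{-\lambda}$. One then follows a maximizing chain $x_{k+1}=\argmax_{\overline B_{R_k}(x_k)}u$ with $R_k=k^{-2}$, so the chain stays in $B_2$, and sets $\delta=(4A)^{-1/\gamma}$. If $\sup_{B_1}u\ge L$, the growth bound makes $\inf_{B_{\delta R_k}(x_k)}u\le 2^{\lambda}\delta^{-\lambda}k^{2\lambda}$ at most half the current maximum. The Hölder estimate then forces the maximum to double at every step, contradicting boundedness of $u$ on $B_2$. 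The cost $\delta^{-\lambda}k^{2\lambda}$ against the gain $2^k$ balances at $k$ of order $\lambda$, giving $L=4\cdot 2^{\lambda}\exp\bigl(\lambda(\gamma^{-1}\log 4A+2\log 2\lambda)\bigr)$. This is $\exp\bigl(C\frac{d}{p-1}\log\frac{d}{p-1}\bigr)$ precisely because $\gamma$ is absolute and $\log A=O(\log\frac{d}{p-1})$.

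A one-shot repair of your Step~2, such as choosing $x$ to maximize $u(y)(2-|y|)^{\lambda}$, does remove the circularity. But it loses a factor $2^{\lambda^2/\gamma}$ and only gives the $\exp\bigl(C(\frac{d}{p-1})^2\bigr)$ order of \cite{LPSharnack}; the radii $k^{-2}$ are what improve $\lambda^2$ to $\lambda\log\lambda$. Your Step~3 chaining is fine. It gives $C_R\sim 1/(R-1)$ instead of the paper's $\log\frac{R}{R-1}$, which the statement allows.
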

\begin{remark}
    See Corollary \ref{cor.quantitative1} for bounds on $C_R$ and an improved estimate for large $R$. Although the dependence on   $p$ and $d$ is not emphasized in \cite{LPSharnack,LPregular}, their arguments yield $H_R(p,d) \le  \exp(C_R(p) d^2)$ for the constant in \eqref{eq.harnack}, while Moser iteration yields a larger upper bound $H_R(d,p) \le \exp(\exp(C_R(p) d))$, see Section \ref{sec:discussion}.
\end{remark}

A key ingredient in the proof of Theorem \ref{thm.harnack} is the following H\"older estimate. 

\begin{proposition}[Interior H\"older continuity] \label{thm.holder}
    There exist   $C,\gamma>0$, such that for any $p$-harmonic function $u$ on $\overline B_{1}(0)$  and every $0<\delta<1$, we have 
    \begin{equation}\label{eq.holder}
        \osc(u,B_\delta(0))\leq \frac{Cd}{p-1} \delta^\gamma \osc(u,B_1(0)).
    \end{equation}
    In particular, we can take $\gamma=\frac{1}{5}$ and $C=10^4$.
\end{proposition}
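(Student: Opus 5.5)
We will prove Proposition \ref{thm.holder} by a probabilistic coupling argument for the tug-of-war with noise game, following the strategy of \cite{LPSharnack,LPregular}, while tracking the constants in $d$ and $p$. Fix $\epsilon>0$ small and let $u_\epsilon$ be the value of the $\epsilon$-step tug-of-war with noise in $B_1(0)$, with boundary data given by $u$ on a thin exterior strip. Recall the dynamic programming principle
\begin{equation*}
u_\epsilon(x)=\frac{\alpha}{2}\Bigl(\sup_{B_\epsilon(x)}u_\epsilon+\inf_{B_\epsilon(x)}u_\epsilon\Bigr)+\beta\fint_{B_\epsilon(x)}u_\epsilon\,dy .
\end{equation*}
Here $\alpha+\beta=1$ and $\alpha/\beta=(p-2)/(d+2)$ for $p\ge2$; for $1<p<2$ we use the variant in \cite{LPregular}, in which the noise is uniform on a ball and the tug moves lie in a lower-dimensional slice. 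It suffices to prove \eqref{eq.holder} for $u_\epsilon$ with constants independent of $\epsilon$, because $u_\epsilon\to u$ locally uniformly. Normalize so that $\osc(u,B_1(0))=1$.

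The core step is a \emph{one-scale oscillation decay}. For $x,y\in B_{r}(0)$ with $r$ small, we run two coupled copies of the game started at $x$ and $y$. The noise steps use the reflection coupling across the bisecting hyperplane, so that the distance $|X_k-Y_k|$ evolves like a one-dimensional process. In the tug steps, each player in the $x$-game copies the reflected strategy of the $y$-game. We then compare $|X_k-Y_k|$ with a comparison function $f(t)=t^\gamma$ (plus a small correction) and show that $\E f(|X_{k+1}-Y_{k+1}|)\le f(|X_k-Y_k|)$ up to the stopping time at which the walks either meet or reach distance $\sim r$.

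The dimension dependence enters here. Under uniform noise in $B_\epsilon$, the projection onto a fixed direction has variance $\epsilon^2/(d+2)$. Concavity of $f$ therefore gives a drift of order $-\beta\gamma(1-\gamma)\epsilon^2 t^{\gamma-2}/(d+2)$. This term must beat the tangential error term $\beta\,\epsilon^2 t^{\gamma-2}\cdot\gamma(d-1)/(d+2)\cdot t^{-1}\cdot t$ and the tug contribution, which is of size $\alpha\epsilon\gamma t^{\gamma-1}$ and is harmful only on the scale $t\lesssim\epsilon$. Balancing these terms yields a fixed exponent $\gamma=1/5$ uniformly in $d$ and $p$, at the cost of a multiplicative factor proportional to $(d+2)/\beta\sim d/(p-1)$. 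This factor comes from the probability that the noise alone brings $|X-Y|$ to order $\epsilon$ before the tug steps can spoil it.

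Next we iterate over scales. Since $\osc$ over $B_{\delta}$ is controlled by $\sup_{x,y\in B_\delta}|u_\epsilon(x)-u_\epsilon(y)|$, the comparison-function supermartingale bound gives $|u_\epsilon(x)-u_\epsilon(y)|\le C\frac{d}{p-1}|x-y|^{\gamma}+O(\epsilon)$. The exit-time estimates use Doob's inequality for $|X_k|^2$, whose expected increment is at least $\beta\epsilon^2 d/(d+2)$.

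The main obstacle is the small-distance regime $|X_k-Y_k|\lesssim\epsilon$, where reflection coupling breaks down and the tug moves dominate. We will handle it by adding to $f$ the correction term $-C\epsilon^{\gamma}$ together with a cutoff, as in \cite{LPregular}, so that the walks can be forced to coincide with positive probability. This is the step where the factor $d/(p-1)$ is incurred, and it must be kept linear rather than the $d^2$ arising in \cite{LPSharnack}. Finally, we choose $C=10^4$ by explicit bookkeeping of the constants.
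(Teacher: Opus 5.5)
Your route (the ball-noise game, reflection coupling of the noise, comparison function $t^\gamma$) is different from the paper's. As written it has two genuine gaps.

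The first gap is in the tug steps. Suppose your player in each game copies the reflection across $\widehat{Z_k}^{\perp}$ of the free player's move. Then, however the coins are coupled, the expected change of $Z_k=X_k-Y_k$ in a tug step is $\bigl((U-V)\cdot\widehat{Z_k}\bigr)\widehat{Z_k}$, where $U$ and $V$ are the moves of the two free (adversarial) players. They can make this equal to $2\epsilon\widehat{Z_k}$ at every tug step. The resulting first-order term, of order $\alpha\epsilon\gamma t^{\gamma-1}$, exceeds your concavity gain $\beta\gamma(1-\gamma)\epsilon^2t^{\gamma-2}/(d+2)$ by the factor $\frac{(p-2)t}{(1-\gamma)\epsilon}$, using your $\alpha/\beta=(p-2)/(d+2)$. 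So it dominates at all scales $t\gg\epsilon/(p-2)$, not only at scales comparable to $\epsilon$, and $f(|X_k-Y_k|)$ is not a supermartingale.

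Your ``tangential error term'' is also inconsistent. Under reflection coupling the noise increment of $Z_k$ is purely radial. Moreover, the term as you wrote it is $(d-1)/(1-\gamma)$ times the concavity gain, so nothing can beat it, and no balancing of these terms produces $\gamma=1/5$.

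The missing idea is the paper's counter-strategy dichotomy. Your player cancels the opponent's vector if it lies within angle $\theta_0$ of $\pm\widehat{Z_n}$, and otherwise pulls by $\epsilon$ toward the other process.
\begin{itemize}
\item In the unaligned case this gives a first-order decrease of $|Z_n|^{1/5}$, at least a constant times $(1-\cos\theta_0)\epsilon$. That absorbs every second-order error and also the first-order growth of $|X_n|^2+|Y_n|^2$ caused by the tug.
\item In the aligned case both processes are unbiased. The rotation coupling $S'S^{-1}$ of the orthogonal noises makes the increment nearly radial, \eqref{eq.verticalgeqtotal}, so concavity gives a decrease of order $\epsilon^2$. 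The radial fraction $3/4$ there is what fixes the admissible exponent.
\end{itemize}

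The second gap concerns the factor $d/(p-1)$ and the range $1<p<2$. In your normalization $(d+2)/\beta=p+d$, which grows with $p$ and is not of order $d/(p-1)$.

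For $1<p<2$ the ball-noise game is not available, since $\alpha=(p-2)/(p+d)<0$. The noise must then be degenerate in a direction tied to the player's move; in the paper's game it is orthogonal to the chosen $v_n$. A player choosing $v_n$ parallel to $\widehat{Z_n}$ therefore makes the noise in that game orthogonal to $\widehat{Z_n}$. Reflection across $\widehat{Z_n}^{\perp}$ then acts on it as the identity, and no radial variance is produced.

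The radial variance must come from the tug moves, which give order $\epsilon^2$. Meanwhile the noise, of total variance $R^2\epsilon^2$ with $R^2=\frac{d-1}{p-1}$, drives $|X_n|^2+|Y_n|^2$ toward $\partial B_1$. This ratio is exactly where $d/(p-1)$ comes from. The paper captures it with the supermartingale $M_n=|X_n|^2+|Y_n|^2+C|Z_n|^{1/5}$, $C=6000(R^2+1)$, stopped when $|Z_n|<\eta$ or a process leaves $B_1$. Optional stopping then bounds the probability of leaving before coupling by $(C+2)\delta^{1/5}$.

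Your proposal has no counterpart of this step. A lower bound on the increments of $|X_k|^2$ says nothing about exiting before coupling; that requires comparing the growth rate of $|X_k|^2+|Y_k|^2$ with the decay rate of $f(|Z_k|)$.

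The small-distance regime you single out as the main obstacle does not need to be analysed, and it is not where the dimension enters. With $\eta$ fixed, let $\epsilon\to0$ using the uniform convergence $u_\epsilon\to u$, then let $\eta\to0$ using the uniform continuity of $u$.

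A side remark: the paper's argument actually gives the constant $C+2$, of order $1+\frac{d}{p-1}$. The additive $1$ cannot be dropped, since linear functions violate \eqref{eq.holder} when $p\gg d$ and $\delta$ is close to $1$.
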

The explicit dependence on $p$ and $d$ is important for our purpose, while the value of the H\"older exponent $\gamma$ is not.
We note that $u$ is, in fact, locally of class $C^{1,\alpha}$, as proved by Uraltseva \cite{Ural'cevainteriorC1alpha} for $p>2$ and later by Lewis \cite{LewisinteriorC1alpha} and Dibendetto \cite{DibenedettointeriorC1alpha} for all $p>1$.

  In Section \ref{sec:discussion}, we present a direct proof of the Harnack inequality in dimension 2 that uses planar geometry instead of   the H\"older estimate and gives better dependence of $H_R(p,2)$ on $p$: 

\begin{proposition}[Restated in Proposition \ref{prop.planar}]\label{prop.planar0}
    Let $u$ be a positive $p$-harmonic function on $B_R(0)\subset\R^2$ where $R>1$, then $u$ satisfies the Harnack inequality \eqref{eq.harnack} with $H_R(p,2)\leq\exp\left(\frac{C_R}{p-1}\right)$.
\end{proposition}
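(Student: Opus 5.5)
This is a planar proof that avoids the Hölder estimate (Proposition \ref{thm.holder}) and aims for $\exp(C_R/(p-1))$. The idea is to use the $\epsilon$-tug-of-war with noise representation of $p$-harmonic functions together with planar topology: in $\R^2$, any continuous path crossing an annulus must hit every closed circle inside it.

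\textbf{Step 1: the game.} For $x\in B_1(0)$ I would approximate $u$ by the value $u_\epsilon$ of the tug-of-war with noise game in $B_{R'}(0)$, with $1<R'<R$ and payoff $u$ on the complement. At each step, with probability $\alpha=(p-2)/(p+d)$-type weights (for $p\ge2$; the $p<2$ version of \cite{LPregular} is used otherwise), the players tug by $\epsilon$. With the complementary probability $\beta=d/(p+d)$ the token moves uniformly in $B_\epsilon$. By \cite{PS08,LPregular}, $u_\epsilon\to u$ locally uniformly, so it suffices to prove the estimate for $u_\epsilon$ with constants uniform in $\epsilon$.

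\textbf{Step 2: comparison along a circle.} Let $M=\sup_{B_1}u=u(x_0)$. The maximum principle for $p$-harmonic functions gives a continuum $\Gamma\subset\{u\ge M\}$ joining $x_0$ to $\partial B_{r}$, with $r$ chosen between $1$ and $R'$. Now start the game at $0$. The player trying to maximize uses the strategy of pulling toward $\Gamma$; more precisely, she pulls in a fixed direction so that the token winds around. Every annulus $A_k=\{2^{-k-1}r<|y|<2^{-k}r\}$ avoided by nothing is crossed by $\Gamma$ only if $x_0$ is inside; instead I use the annulus $A=\{1<|y|<r\}$. It suffices that, with probability at least $c=\exp(-C_R/(p-1))$, the token's path (interpolated continuously) closes a loop in $A$ around the origin before exiting $B_{R'}$. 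Such a loop must hit $\Gamma$, where $u_\epsilon\ge M-o(1)$. Using $u\ge0$, optional stopping of the game value then gives $u(0)\ge c\,(M-o(1))$.

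\textbf{Step 3: winding probability (main obstacle).} I need a lower bound, uniform in $\epsilon$ and in the opponent's strategy, on the probability that the token first reaches $\partial B_{(1+r)/2}$ and then circles once inside $A$. The plan is to cover the loop by $N=O_R(1)$ balls of radius comparable to $r-1$. Inside each ball, a supermartingale/Lyapunov argument in the spirit of the Hölder proof shows that the token can advance to the next ball with probability bounded below by $e^{-C/(p-1)}$. The opponent's tugs can be fought only through the noise, whose share is $\beta\approx d/(p+d)$ relative to the tug share. In two dimensions, the exponent needed for the comparison function $|y-z|^{-s}$ to be subharmonic for the game operator is $s\approx C/(p-1)$. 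Thus each step costs a factor $\exp(-C/(p-1))$, and chaining $N$ balls gives $c=\exp(-C_R/(p-1))$.

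The delicate part is choosing the barrier $v(y)=|y-z|^{-s}$ so that it is a subsolution of the dynamic programming principle, with errors $O(\epsilon)$, for both players' moves. It must also keep the token inside the annulus, which needs a second barrier near $\partial B_1$ and $\partial B_r$. I would try to handle both with a single angular-radial test function $e^{s\theta}$ times a radial bump. I expect verifying its sub-DPP property, with explicit dependence on $p-1$, to be the main technical work.
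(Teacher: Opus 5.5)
Your overall architecture matches the paper's. You play the game from $0$ and let the maximizer follow a strategy under which, with probability at least $\exp(-C/(p-1))$, the trajectory contains a closed curve in the annulus surrounding $\overline{B_1}$. Such a curve must meet $\{u\ge\sup_{B_1}u\}$, and $u\ge 0$ finishes the estimate. Your continuum $\Gamma$ is a fine substitute for the paper's maximum-principle argument with the level set $E$.

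The gap is in Step~3: the event you set out to estimate does not produce the closed curve that Step~2 needs. Chaining balls around the circle $|y|=(1+r)/2$ only shows that the token travels about once around the annulus inside a tube. A trajectory doing this can be a spiral with no self-intersection. Since $\Gamma$ is merely some continuum in a superlevel set of $u$, nothing prevents it from spiralling through the tube several times. The token can then slip between its arms and never come near it. Prescribing more turns does not help, since $\Gamma$ can spiral more. So the implication ``circles once $\Rightarrow$ closes a loop around the origin'' fails, and Step~3 does not feed into Step~2.

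What is missing is a crossing structure that forces the trajectory to cut its own earlier part. The paper gets this from rectangles $Q_1,\dots,Q_7\subset B_4$, with $Q_n\cap B_1=\emptyset$ for $n\ge 2$, each carrying a target edge contained in the interior of the next rectangle. They are arranged so that any concatenation of curves $\gamma_i\subset Q_i$ running from the previous target edge to the next one contains a closed curve surrounding $B_1$. This is where one uses that a left--right and a top--bottom crossing of a rectangle must meet. The probabilistic input is that one player can force the token to leave $Q_i$ through its target edge with probability $\exp(-C_i/(p-1))$, uniformly in $\epsilon$ and in the opponent's strategy. The paper imports this from \cite[Lemma 3.1]{PWboundaryderivative}.

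Your barrier can supply this estimate, and more cheaply than you expect. When the maximizer pulls toward $z$, the opponent's best reply is (up to $O(\epsilon^3)$) to pull straight away. In the planar Peres--Sheffield game used in the paper, $|y-z|^{-s}$ is then a subsolution as soon as $s>(2-p)/(p-1)$. Hence the token reaches $B_{\rho_1}(z)$ before leaving $B_{\rho_2}(z)$ with probability at least $(\rho_0^{-s}-\rho_2^{-s})/(\rho_1^{-s}-\rho_2^{-s})-o(1)$, where $\rho_0$ is the initial distance to $z$. This event already confines the path to $B_{\rho_2}(z)$, so no second barrier or angular test function is needed. Chaining such balls along the midline of a thin rectangle gives the required crossing.

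Finally, $u_\epsilon(X_n)$ need not be a submartingale under a non-optimal maximizer strategy, so the last step is not optional stopping. It is the strategy-switching inequality $u_\epsilon(0)\ge\inf_{S_{II}}\E_{S_I,S_{II}}[u_\epsilon(X_{\tau\wedge n})]$ of \cite[Lemma 2.3]{LPSharnack}, obtained by switching to an almost optimal strategy after $\tau$.
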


\section{Proof of the H\"older estimate via tug of war}
We first recall the relationship between   $p$-harmonic functions in the unit ball $B_{1}(0)$ and  tug-of-war with noise, see \cite[Section 1.1]{PS08} or \cite{Parviainenbook} for details.
    
    Given   boundary values $F:\partial B_1(0)\rightarrow \R$
    and $\epsilon>0$, a two-player zero-sum game  with starting point $X_0=x$  is defined as follows. In step $n\ge 1$ a fair coin is tossed and the winner determines the next position $X_n$ as follows. If $B(X_{n-1},(R+1)\epsilon)$ intersects  $\partial B_1(0)$, then   the winner chooses $X_n$ from this intersection, we define $\tau_0=n$,  and the game terminates.  Otherwise, the winner chooses a vector $v_n\in \overline{B_\epsilon(0)}$, and a random noise $w_n$  is drawn uniformly from the $(d-2)$-dimensional sphere $ \{w\in\Rd: \, w \perp v_n, \, \,  \|w\|_2=R\}$, where $R=\sqrt{\frac{d-1}{p-1}}$. The noise vectors $(w_j)_{j\ge 1}$ are independent. In this case, $X_n:=X_{n-1}+v_n+w_n$.   
    %Player I always tries to maximize the value $F(X_\tau)$, while player II tries to minimize it. 
    At time $\tau_0$, player II pays the amount $F(X_{\tau_0})$ to player I.
   The value (for player I) of this game   
   starting from $x$ is  $u_\epsilon(x):=\sup_{S_I}\inf_{S_{II}}\mathbb E_{S_I,S_{II}}[F(X_{\tau_0})]$.  In \cite[Theorem 1.2]{PS08}, it is shown that $u_\epsilon (x)$ converges to the unique $p$-harmonic function $u(x)$ with boundary data $F$ as $\epsilon\rightarrow 0$. 

\begin{proof}[Proof of proposition \ref{thm.holder}]
    For any two points $x,y\in B_{\delta}(0)$, we can consider two games starting at $x$ and $y$ simultaneously. Both have boundary data $F=u:\partial B_1(0)\rightarrow\R$. We denote the resulting processes by $(X_n)$ and $(Y_n)$, respectively. For each $n \ge 1$, let  $\F_n$  denote the $\sigma$-field generated by $(X_i,Y_i)_{i\leq n}$. Our goal is to define a good strategy and then define a coupling of the two processes with good cancellations. We first describe the strategies.

    In the following, we use the notation
    \begin{equation}
        \hat v:=\frac{v}{|v|} .
    \end{equation}
    for $v\in\Rd\setminus\{0\}$. We also set
    \begin{equation}
        Z_n:=X_n-Y_n.
    \end{equation}

    Consider  the game starting from $x$. For any strategy $S_I$ used by player I (the maximizer), we can define a counter-strategy $S_{II}^*=S_{II}^*(S_I)$ for player II  as follows. Given the current position $X_{n},Y_{n}$ and the move $U_{n+1}$ where player I would move according to $S_I$, the counter-strategy $S_{II}^*$ chooses a vector 
    \begin{align*}
        \begin{cases}
            -U_{n+1},\ &\text{if}\  \widehat{Z_n}\bigcdot\widehat{U_{n+1}}>\cos \theta_0;
            \\ -\epsilon\widehat{Z_n},\ &\text{otherwise}.
        \end{cases}
    \end{align*}
    Here $\theta_0$ is some constant to be determined later. See Figure \ref{fig.strategy}.
    
\begin{figure}
    \centering
    \begin{minipage}{.49\linewidth}
        \centering
        \includegraphics[height=.6\linewidth]{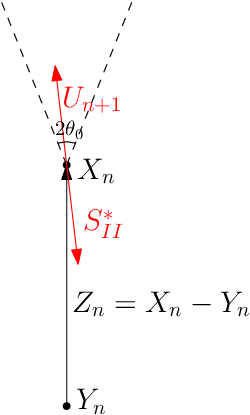}
    \end{minipage}
    \begin{minipage}{.49\linewidth}
        \centering
        \includegraphics[height=.6\linewidth]{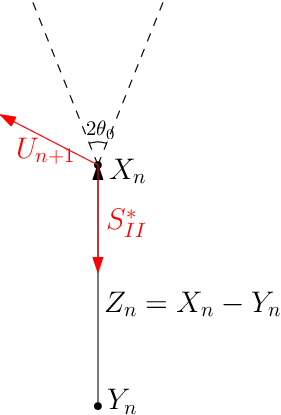}
    \end{minipage}
    \caption{In the left figure, when $U_{n+1}$ is in the direction very close to $\widehat{Z_n}$, the strategy $S_{II}^*$ tries to cancel this intended move. In the right figure, when $U_{n+1}$ is not in the direction $\widehat{Z_n}$, the strategy just moves towards $Y_n$.}
    \label{fig.strategy}
\end{figure}
    Similarly, in the game starting from $y$, for any strategy $S_{II}$ for player II, we define the counter-strategy $S_{I}^*$. Given the current position $X_{n},Y_{n}$ and the move $V_{n+1}$ where player II would move according to $S_{II}$, the counter-strategy $S_{I}^*$ chooses a vector
    \begin{align*}
        \begin{cases}
            -V_{n+1},\ &\text{if}\  -\widehat{Z_n}\bigcdot\widehat{V_{n+1}}>\cos \theta_0;
            \\ \epsilon\widehat{Z_n},\ &\text{otherwise}.
        \end{cases}
    \end{align*}
    
    We now define the whole process. The key trick is to define a good coupling of the noises. We separate the discussion into two cases.

    \textbf{Aligned case: }During the $(n+1)$-th step, if both players' choices are $\theta_0$-aligned with $\widehat{Z_n}$, then we define a special coupling of the two processes. More precisely, if
    %both of the two following cases happen
    \begin{equation}\label{eq.unbiasedcondition}
    \widehat{Z_n}\bigcdot\widehat{U_{n+1}}>\cos \theta_0 \, \quad \, \text{and} \, \, \quad  
         -\widehat{Z_n}\bigcdot\widehat{V_{n+1}}>\cos \theta_0,
    \end{equation}
    then we use a single coin toss to decide which player to move. If the result is heads, then player I moves in the process $(X_n)$ and player II moves in the process $(Y_n)$. We also couple the noises created by the move. By the definition of the game, the noises must be independent with the coin toss. Note that in $(X_n)$, the noise is always orthogonal to $U_{n+1}$, while in $(Y_n)$, the noise is always orthogonal to $V_{n+1}$. By the assumption, in this case, the angle  between either of the two normal vectors and $Z_n$ is less than $\theta_0$. Therefore, We can find two rotations $S,S'\in SO(d)$ such that
    \begin{itemize}
        \item $S-I$ and $S'-I$ have rank 2.
        \item $S$ maps $\mathrm{span}\{Z_n\}$ to $\mathrm{span}\{U_{n+1}\}$, while fixing the space $\mathrm{span}\{Z_n,U_{n+1}\}^\perp$. $S'$ maps $\mathrm{span}\{Z_n\}$ to $\mathrm{span}\{V_{n+1}\}$, while fixing the space $\mathrm{span}\{Z_n,V_{n+1}\}^\perp$.
        \item $S-I$ and $S'-I$ are $2$-dimensional rotation with angle $\theta_1$ and $\theta_2$ respectively, where $\theta_1,\theta_2\in[0,\theta_0]$.
    \end{itemize}
    Therefore, we can couple the  noises arising in the two processes. We first draw a uniform noise $B_n$ in the hyperplane orthogonal to $U_n$ in the definition of the process $(X_n)$. Then we observe that $ S'S^{-1}B_n$ defines the noise of the process $(Y_n)$.

     We claim that for $\theta_0<\frac{1}{10}$, we have,
    \begin{equation}\label{eq.normalgeqtotal}
        \E\left[\left.\left(\left(S'S^{-1} B_{n+1}-B_{n+1}\right)\bigcdot \widehat{Z_{n}}\right)^2\right|\F_{n}\right]\geq \frac{3}{4}\E\left[\left.\left|S'S^{-1} B_{n+1}-B_{n+1}\right|^2\right|\F_{n}\right].
    \end{equation}
    
    To see this, we can, without loss of generality, project the problem in the space $\mathrm{span}\{Z_{n},U_{n+1},V_{n+1}\}$ and assume we are in $3$-dimensional case. It suffices to prove
    \begin{align*}
        \E\left[\left.\left(\left(S'S^{-1} B_{n+1}-B_{n+1}\right)\bigcdot \widehat{Z_{n}}\right)^2\right|\F_{n}\right]\geq 3\E\left[\left.\left|P_{\mathrm{span}\{Z_n\}^\perp}\left(S'S^{-1} B_{n+1}-B_{n+1}\right)\right|^2\right|\F_{n}\right].
    \end{align*}
    By an elementary geometric calculation, one obtains
    \begin{align*}
        \left|P_{\mathrm{span}\{Z_n\}^\perp}\left(S'S^{-1} B_{n+1}-B_{n+1}\right)\right|\leq R\epsilon(|1-\cos\theta_1|+|1-\cos\theta_2|)\psi\leq R\epsilon (\theta_1^2+\theta_2^2)\psi,
    \end{align*}
    Here we use $\psi\in[0,2\pi]$ to denote the angle between the normal vectors of the two rotations.

    Similarly, for the part projected to $\widehat{Z_n}$, we have 
    \begin{align*}
        \E\left[\left.\left(\left(S'S^{-1} B_{n+1}-B_{n+1}\right)\bigcdot \widehat{Z_{n}}\right)^2\right|\F_{n}\right]&=\frac{R^2\epsilon^2}{2\pi}\int_{0}^{2\pi}|\cos\phi\sin\theta_1-\cos(\phi+\psi)\sin\theta_2|^2\d\phi
        \\&=\frac{R^2\epsilon^2}{2\pi}\int_0^{2\pi}|\sin\phi\sin\psi\sin\theta_2+C\cos\phi|^2\d\phi
        \\&\geq\frac{1}{2}R^2\epsilon^2\sin^2\psi\sin^2\theta_2.
    \end{align*}
    Similar argument also gives
    \begin{align*}
        \E\left[\left.\left(\left(S'S^{-1} B_{n+1}-B_{n+1}\right)\bigcdot \widehat{Z_{n}}\right)^2\right|\F_{n}\right]\geq\frac{1}{2}R^2\epsilon^2\sin^2\psi\sin^2\theta_1.
    \end{align*}
    Combining the three displays above, we observe that for $\theta_0<\frac{1}{10}$, \eqref{eq.normalgeqtotal} holds.
    
    Also, we know their steps are also in the direction very close to $\widehat{Z_n}$. If we write $D_{n+1}=Z_{n+1}-Z_{n}$ to be the increment, then we can conclude that
    \begin{equation}\label{eq.verticalgeqtotal}
        \E \left[\left.\left|D_{n+1}\bigcdot \widehat {Z_{n}}\right|^2\right|\F_{n}\right]\geq \frac{3}{4}\E \left[|D_{n+1}|^2|\F_{n}\right].
    \end{equation}
    %In particular, their difference $D_n$ is always bounded by $\epsilon R\bigcdot2\theta_0$. We analyze the process $Z_n=Y_n-X_n$. Note that
    %\begin{equation*}
        %Z_{n}-Z_{n-1}=\begin{cases}
            %V_n-U_n+D_n,\ \text{with probability}\  \frac{1}{2};
            %\\ U_n-V_n+D_n,\ \text{with probability}\  \frac{1}{2}.
        %\end{cases}
    %\end{equation*}
    Note that in this case, by the definition of the strategies, we have
    \begin{equation}\label{eq.unbias}
        \E [D_{n+1}|\F_{n}]=0.
    \end{equation} 
    %\begin{equation}
        %\E \left[\left.\left|(Z_n-Z_{n-1})\bigcdot \frac{Z_{n-1}}{|Z_{n-1}|}\right|^2\right|\F_{n-1}\right]\geq (2\epsilon\cos\theta_0)^2-(2\epsilon R\theta_0)^2\geq 3\epsilon^2
    %\end{equation}
    %for sufficiently small $\theta_0=\min\left\{\frac{1}{4R},\frac{1}{10}\right\}$. Similarly, we can compute
    %\begin{equation}
        %\E [\left|Z_n-Z_{n-1}\right|^2|\F_{n-1}]\leq (2\epsilon)^2+(2\epsilon R\theta_0)^2\leq 5\epsilon^2.
    %\end{equation}

    We use a Taylor expansion for the function $(1+x)^{2\beta}$ with $\beta=1/10$. We also fix some $\eta>0$ to be determined later, and assume that $\eta<|Z_i|\leq 1$ holds for all $i$. We have
    \begin{equation*}
        \begin{split}
            |Z_{n+1}|^{2\beta}=&|Z_{n}|^{2\beta}\left(1+2\widehat{Z_{n}}\bigcdot\frac{D_{n+1}}{|Z_{n}|}+\left\vert\frac{D_{n+1}}{|Z_{n}|}\right\vert^2\right)^{\beta}
            \\=&|Z_{n}|^{2\beta}\left(1+2\beta \widehat{Z_{n}}\bigcdot\frac{D_{n+1}}{|Z_{n}|}+\beta\left\vert\frac{D_{n+1}}{|Z_{n}|}\right\vert^2+2\beta(\beta-1)\left(\widehat{Z_{n}}\bigcdot\frac{D_{n+1}}{|Z_{n}|}\right)^2+O(\epsilon^3)\right).
        \end{split}
    \end{equation*}
    We take a conditional expectation $\E[\bigcdot|\F_n]$ and apply \eqref{eq.verticalgeqtotal} and \eqref{eq.unbias} to obtain
    \begin{equation} \label{eq.marZunbiased}
    \begin{split}
                &\E[|Z_{n+1}|^{2\beta}|\F_{n}]
                \\=&|Z_{n}|^{2\beta}\left(1+\beta|Z_{n}|^{-2}\E[|D_{n+1}|^2|\F_{n}]+2\beta(\beta-1)|Z_{n}|^{-2}\E[|D_{n+1}\bigcdot\widehat{Z_{n}}|^2|\F_{n}]+O(\epsilon^3)\right)
                \\\leq&|Z_{n}|^{2\beta}-\frac{\beta}{4}|Z_{n}|^{2\beta-2}\E[|D_{n+1}|^2|\F_{n}]+O(\epsilon^3)
                \\\leq&|Z_{n}|^{2\beta}-\frac{1}{40}\epsilon^2
    \end{split}
    \end{equation}
    for $\epsilon<\epsilon_0=\epsilon_0(\eta)$. Here, from the second line to the third line, we use \eqref{eq.verticalgeqtotal} and note that the particular choice of $\beta$ guarantees $1+3(\beta-1)/2\leq-1/4$.
    
    %Combining the computations above, we can do a martingale type computation. Recall that until now, we are under the assumption \eqref{eq.unbiasedcondition}. We also fix some $\eta>0$ to be determined later, and assume $\eta<|Z_i|\leq 1$ holds for all $i$.
    %\begin{equation}\label{eq.marZunbiased}
    %\begin{split}
        %&\E[|Z_n|^{2\beta}|\F_{n-1}]\\=&\E\left[\left.\left(|Z_{n-1}|^2+2 Z_{n-1}\bigcdot (Z_n-Z_{n-1})+|Z_{n}-Z_{n-1}|^2\right)^\beta\right|\F_{n-1}\right]
        %\\=&|Z_{n-1}|^{2\beta}\E\left[\left.\left(1+2 \frac{Z_{n-1}}{|Z_{n-1}|}\bigcdot \frac{Z_n-Z_{n-1}}{|Z_{n-1}|}+\frac{|Z_{n}-Z_{n-1}|}{|Z_{n-1}|^2}^2\right)^\beta\right|\F_{n-1}\right]
        %\\=& |Z_{n-1}|^{2\beta}\left(1+\beta\E\left[\left.2 \frac{Z_{n-1}}{|Z_{n-1}|}\bigcdot \frac{Z_n-Z_{n-1}}{|Z_{n-1}|}+\frac{|Z_{n}-Z_{n-1}|}{|Z_{n-1}|^2}^2\right|\F_{n-1}\right]
        %\right.\\&+\left.\frac{\beta(\beta-1)}{|Z_{n-1}|^2}\E \left[\left.\left|2(Z_n-Z_{n-1})\bigcdot \frac{Z_{n-1}}{|Z_{n-1}|}\right|^2\right|\F_{n-1}\right]+O(\epsilon^3)\right)
        %\\ \leq &|Z_{n-1}|^{2\beta}+|Z_{n-1}|^{2\beta-2}2\beta \bigcdot\E \left[\left.\left|(Z_n-Z_{n-1})\bigcdot \frac{Z_{n-1}}{|Z_{n-1}|}\right|^2\right|\F_{n-1}\right]
        %\\&+4|Z_{n-1}|^{2\beta-2}\beta(\beta-1)\bigcdot \E [\left|Z_n-Z_{n-1}\right|^2|\F_{n-1}]+O(\epsilon^3)
        %\\ \leq & |Z_{n-1}|^{2\beta}-\frac{1}{100}\epsilon^2,
   % \end{split}
    %\end{equation}
    %as long as $\epsilon<\epsilon_0=\epsilon_0(\eta)$. 

    Meanwhile, in this case, the processes $X_n$ and $Y_n$ are unbiased, and we have
    \begin{equation}\label{eq.marXYunbiased}
        \begin{split}
            \E[|X_{n+1}|^{2}|\F_{n}]\leq|X_{n}|^2+\epsilon^2+R^2\epsilon^2;
            \\ \E[|Y_{n+1}|^{2}|\F_{n}]\leq|Y_{n}|^2+\epsilon^2+R^2\epsilon^2.
        \end{split}
    \end{equation}
    
    We denote 
    \begin{equation}\label{eq.defM}
        M_n=|X_{n}|^2+|Y_n|^2+C|Z_n|^{2\beta},
    \end{equation}
    where $C>80R^2$ is a constant to be chosen later.
    
    Combining \eqref{eq.marZunbiased} and \eqref{eq.marXYunbiased}, we have
    \begin{equation}\label{eq.martingaleM1}
        \E[M_{n+1}|\F_{n}]\leq M_{n}
    \end{equation}
    under the condition \eqref{eq.unbiasedcondition} and $|Z|>\eta$.

    \textbf{Unaligned case:} On the other hand, if the condition \eqref{eq.unbiasedcondition} is not satisfied, we keep the coins and noises independent. In this case $|Z_{n}|$ is decreasing. More precisely, as long as $|Z_{n}|>\eta$, we have that
    \begin{equation}
    \begin{split}
        &\E[|Z_{n+1}|^{2\beta}|\F_{n}]
        \\=&|Z_{n}|^{2\beta}\left(1+\beta\E\left[\left.2 \widehat{Z_{n}}\bigcdot \frac{D_{n+1}}{|Z_{n}|}\right|\F_{n}\right]+O(\epsilon^2)\right)
        \\\leq &|Z_{n}|^{2\beta}-2\beta(1-\cos\theta_0)\epsilon +O(\epsilon^2)
        \\ \leq & |Z_{n}|^{2\beta} -\frac{1}{1000}\epsilon
    \end{split}
    \end{equation}
    for $\epsilon<\epsilon_0(\eta)$.

    On the other hand, we have
    \begin{equation}
        \begin{split}
            \E[|X_{n+1}|^2|\F_{n}]\leq |X_{n}|^2+2(R+1)\epsilon +O(\epsilon^2)\leq |X_{n}|^2+3(R+1)\epsilon,
        \end{split}
    \end{equation}
    for $\epsilon<\epsilon_0(\eta)$. The same statement holds for $|Y_n|$. In particular, as long as $|Z_{n}|>\eta$ we have 
    \begin{equation}\label{eq.martingaleM2}
        \E[M_{n+1}|\F_{n}]\leq M_{n},
    \end{equation}
    for $C\geq6000(R+1)$. Recall the definition of $M_n$ in \eqref{eq.defM}.

    Now we define the stopping time 
    \begin{equation}
        \tau=\inf\{n\geq 0:\ |Z_n|<\eta\ \text{or}\ |X_n|>1\ \text{or}\ |Y_n|>1\}.
    \end{equation}
    Then \eqref{eq.martingaleM1} and \eqref{eq.martingaleM2} are just saying that $M_{n\wedge\tau}$ is a supermartingale with respect to $\F_n$, with the choice $C=6000(R^2+1)\ $. Then the optional stopping theorem implies that 
    \begin{equation*}
        \E[M_\tau]\leq M_0\leq C\delta^{2\beta}+2\delta^2\leq (C+2)\delta^{2\beta}.
    \end{equation*}
    Note that $M_\tau$ is nonnegative and therefore, 
    \begin{equation}
        \P[|X_\tau|^2+|Y_\tau|^2\geq 1]\leq \P[M_\tau\geq 1] \leq (C+2)\delta^{2\beta}.
    \end{equation}

    Now we turn to the value of the game. Using \cite[Lemma 2.3]{LPSharnack} (although the result is proved for $p>2$, the same argument can be applied to general $p>1$ case), we have that
    \begin{equation}
        u_\epsilon(x)\leq \sup_{S_I}\E_{S_I,S_{II}^*}[u_\epsilon (X_\tau)]
    \end{equation}
    and
    \begin{equation}\label{eq.martgeq}
        u_\epsilon(y)\geq \inf_{S_{II}}\E_{S_I^*,S_{II}}[u_\epsilon (X_\tau)].
    \end{equation}
    Let us compute 
    \begin{align*}
        &\E [u_\epsilon (X_\tau)-u_\epsilon (Y_\tau)]
        \\\leq &\P[|X_\tau-Y_\tau|<\eta]\E \left[u_\epsilon (X_\tau)-u_\epsilon (Y_\tau)\big||X_\tau-Y_\tau|<\eta\right]+\P[|X_\tau-Y_\tau|>\eta]\left(\sup_{B_1}u_\ep-\inf_{B_1} u_\ep\right)
        \\ \leq& \sup_{p,q\in B_1,\ |p-q|<\eta} |u_\ep (p)-u_\epsilon(q)|+(C+2)\delta^{2\beta}\left(\sup_{B_1}u_\ep-\inf_{B_1} u_\ep\right).
    \end{align*}
    Taking supremum over $S_I,S_{II}$, we conclude that
    \begin{equation*}
        u_\epsilon(x)-u_\epsilon(y)\leq \sup_{p,q\in B_1,\ |p-q|<\eta} |u_\ep (p)-u_\epsilon(q)|+(C+2)\delta^{2\beta}\left(\sup_{B_1}u_\ep-\inf_{B_1} u_\ep\right).
    \end{equation*}
    Since $u_\epsilon$ converges uniformly to $u$, we can take the limit $\epsilon\rightarrow 0$ to obtain
    \begin{equation*}
        u(x)-u(y)\leq \sup_{p,q\in B_1,\ |p-q|<\eta} |u (p)-u(q)|+(C+2)\delta^{2\beta}\left(\sup_{B_1}u-\inf_{B_1} u\right).
    \end{equation*}
    Then we take the limit $\eta\rightarrow 0$ and use the uniform continuity of $u$ to see
    \begin{equation}
        u(x)-u(y)\leq C'\delta^{2\beta}\left(\sup_{B_1}u-\inf_{B_1} u\right),
    \end{equation}
    which is what we want to prove.
\end{proof}
Note that this type of coupling was already used in \cite{LPregular}. Here we refine the proof and give explicit dependence of constants on $p$ and $d$. 

\section{Proof of The Harnack Inequality}
Using the H\"older estimate, an argument in \cite{LPSharnack} implies a Harnack inequality. Here we modify the argument to provide better constants.

\begin{lemma}\label{lem.holdertoharnack}
    Let $u$ be a positive continuous function on $B_5(0)\subset \R^d$ with $u(0)=1$. Suppose that there exist     $A\geq 1$ and   $\gamma>0$ such that for all $0<r<R$ and $|x|< 2$,
    \begin{equation}\label{eq.oscilation}
        \mathrm{osc}(u,B_r(x))\leq A\left(\frac{r}{R}\right)^\gamma \mathrm{osc}(u,B_R(x)) \,.
    \end{equation}
   Assume also that there exist $C>0$ and $\lambda>0$ such that  for all $r\leq 1$  and $|x|< 2 -r$,
    \begin{equation}\label{eq.mildgrowth}
        \inf_{B_r(x)}u\leq C r^{-\lambda} \,.
    \end{equation}
     Then
    \begin{equation}
        \sup_{B_1(0)}u\leq 4C\exp\left(\lambda\left(\frac{1}{\gamma}\log4A+2\log 2\lambda\right)\right)
    \end{equation}
\end{lemma}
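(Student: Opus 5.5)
The plan is to argue by contradiction, using the Krylov--Safonov type ``chasing'' argument of \cite{LPSharnack}. Set $M:=4C\exp\bigl(\lambda(\frac1\gamma\log 4A+2\log 2\lambda)\bigr)$ and suppose $\sup_{B_1(0)}u>M$. I will build a sequence of points $x_k$ along which $u$ at least doubles (up to a harmless factor) while the steps $|x_{k+1}-x_k|$ are summable with total sum less than $1$. All the $x_k$ then stay in a compact subset of $B_2(0)$, but $u(x_k)\to\infty$, contradicting continuity of $u$ on $B_5(0)$.

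\textbf{Construction.} Choose $x_0\in B_1(0)$ with $M_0:=u(x_0)>M$. Given $x_k$ with $M_k:=u(x_k)$, define the radius $r_k$ by $Cr_k^{-\lambda}=M_k/2$, that is, $r_k=(2C/M_k)^{1/\lambda}$. Since $M_k\ge M_0>M\ge 4C$, we have $r_k<1$. Provided $|x_k|<2-r_k$, \eqref{eq.mildgrowth} gives $\inf_{B_{r_k}(x_k)}u\le M_k/2$, and since $x_k$ lies in this ball,
\[
\osc(u,B_{r_k}(x_k))\ge \tfrac{M_k}{2}.
\]
Now apply \eqref{eq.oscilation} with inner radius $r_k$ and outer radius $\rho_k:=(4A)^{1/\gamma}r_k$ (slightly enlarged if needed to get strict inequalities). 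This gives
\[
\osc(u,B_{\rho_k}(x_k))\ge \frac{M_k}{2A}\Bigl(\frac{\rho_k}{r_k}\Bigr)^{\gamma}= 2M_k.
\]
Since $u>0$, it follows that $\sup_{B_{\rho_k}(x_k)}u> 2M_k\cdot(1-o(1))$. So we can pick $x_{k+1}$ with $|x_{k+1}-x_k|<\rho_k$ and $M_{k+1}:=u(x_{k+1})\ge \frac32 M_k$. Using a factor $2$ instead of $\frac32$ by slightly enlarging $\rho_k$ also works; only the geometric growth matters.

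\textbf{Summation.} By induction $M_k\ge 2^kM_0$ (or $(3/2)^kM_0$), so
\[
\rho_k\le (4A)^{1/\gamma}\Bigl(\frac{2C}{M_0}\Bigr)^{1/\lambda}2^{-k/\lambda},
\qquad
\sum_k\rho_k\le (4A)^{1/\gamma}\Bigl(\frac{2C}{M_0}\Bigr)^{1/\lambda}\frac{1}{1-2^{-1/\lambda}}.
\]
Bounding $1/(1-2^{-1/\lambda})\lesssim 2\lambda$ (via $1-2^{-t}\ge t\log 2/(1+t\log2)$) shows the right side is $<1$ exactly when $M_0$ exceeds a quantity of the form $2C(4A)^{\lambda/\gamma}(c\lambda)^{\lambda}$. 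The stated $M$, with its factor $(2\lambda)^{2\lambda}$ and the extra factor $2$ in $4C$, is designed to absorb these constants. Then $|x_k|<1+\sum_j\rho_j<2$, and the same estimate guarantees $|x_k|+r_k<2$ and $|x_k|<2$. This justifies each application of \eqref{eq.mildgrowth} and \eqref{eq.oscilation}; note that \eqref{eq.oscilation} allows any outer radius $R$, and $B_{\rho_k}(x_k)\subset B_5(0)$ holds because $\rho_k<1$.

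\textbf{Main obstacle.} The only delicate point is the constant bookkeeping. The domain constraints $|x_k|<2-r_k$ must be maintained inductively by requiring the tail sum plus the current radius to be less than $1$. The elementary inequality for $1/(1-2^{-1/\lambda})$ must also be handled in both regimes, $\lambda$ small and $\lambda$ large, so that the final threshold matches the claimed expression $4C\exp(\lambda(\frac1\gamma\log4A+2\log2\lambda))$. If the factor $2\log 2\lambda$ turns out to be too tight for small $\lambda$, I would use growth factor $2$ at each step and use $\lambda\ge$ something implicit. Alternatively, I would note that for $\lambda<1/2$ the bound can be reduced to the case $\lambda=1/2$, since \eqref{eq.mildgrowth} with exponent $\lambda$ implies it for any larger exponent when $r\le1$.
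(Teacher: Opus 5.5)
Your argument is correct. It is the same contradiction-by-chasing-maxima argument as the paper's, but it differs in the choice of radii, which is exactly what determines the constant in this lemma.

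\textbf{How the two routes differ.} The paper fixes the radii in advance: $R_k=k^{-2}$, with inner radius $\delta R_k$ and $\delta=(4A)^{-1/\gamma}$. The total displacement is then automatically below $\pi^2/6<2$. All the work goes into keeping the mild-growth bound $C(\delta R_k)^{-\lambda}=C\delta^{-\lambda}k^{2\lambda}$ below half the current maximum, i.e.\ $k^{2\lambda}\le 2^k(2\lambda)^{2\lambda}$. This is just $t\le 2^t$ with $t=k/(2\lambda)$, and it is where the term $2\log 2\lambda$ comes from.

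You instead choose $r_k$ adaptively with $Cr_k^{-\lambda}=M_k/2$. This makes the infimum step tight and moves the work into the domain constraint, which becomes a geometric series. Your threshold is $T=2C(4A)^{\lambda/\gamma}(1-2^{-1/\lambda})^{-\lambda}$. It lies below the stated constant, and for large $\lambda$ it has $\lambda\log\lambda$ rather than $2\lambda\log(2\lambda)$ in the exponent. This gain is immaterial for the Harnack corollary, where $\frac{\lambda}{\gamma}\log 4A$ is of the same order.

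Your bookkeeping is also cleaner: you set $M_k=u(x_k)$ and take both $\sup$ and $\inf$ on balls centred at $x_k$. The paper's displayed induction step conflates $M_k=\max_{\overline B_{R_k}(x_k)}u$ with $\sup_{B_{\delta R_k}(x_k)}u$.

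\textbf{The step you left open does close.} You described the constant as ``designed to absorb these constants''; this holds for every $\lambda>0$, so you can drop the fallback. Reducing $\lambda<\frac12$ to $\lambda=\frac12$ would only give $4C(4A)^{1/(2\gamma)}$, which is weaker than the claim.

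Your bound $(1-2^{-1/\lambda})^{-1}\le c\lambda$ is false for small $\lambda$. What $1-e^{-s}\ge s/(1+s)$ actually gives is $(1-2^{-1/\lambda})^{-1}\le 1+\lambda/\log 2$. It then remains to show $(1+\lambda/\log 2)^{\lambda}<2(2\lambda)^{2\lambda}$.
\begin{itemize}
\item For $\lambda\le\frac12$: the left side is increasing in $\lambda$, so it is at most $(1+\frac{1}{2\log 2})^{1/2}<1.32$. The right side is at least $2e^{-1/e}>1.38$.
\item For $\lambda\ge\frac12$: $1+\lambda/\log 2<4\lambda\le 2^{1/\lambda}(2\lambda)^2$, since $\lambda 2^{1/\lambda}\ge 1$. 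Raising to the power $\lambda$ gives the claim.
\end{itemize}
Hence $T<M$ and $\sum_k\rho_k<1$, which gives $|x_k|+r_k<2$ at every step.

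\textbf{A minor slip.} Your assertion $M\ge 4C$ can fail, e.g.\ for $\lambda<\frac12$ and large $\gamma$, since $(2\lambda)^{2\lambda}<1$ there. You only need $r_k<1$, and that already follows from $r_k\le\rho_k<1$.
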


\begin{proof} 
    Set $R_k=\frac{1}{k^2}$ and $\delta=\left(4A\right)^{-1/\gamma}$. Let $x_1=0$. We define inductively
    \begin{equation*}
        M_k=\max_{\overline{B}_{R_k}(x_k)}u
    \end{equation*}
    and
    \begin{equation*}
        x_{k+1}=\argmax_{\overline{B}_{R_k}(x_k)}u.
    \end{equation*}

    We assume for contradiction that 
    \begin{equation*}
        M_1\geq L:=4C\exp\left(\lambda\left(\frac{1}{\gamma}\log4A+2\log 2\lambda\right)\right).
    \end{equation*}
    We claim that under this assumption, one has
    \begin{equation}\label{eq.exponentialgrowth}
        M_k\geq  2^{k-1}L
    \end{equation}
    for all $k\geq2$.

    Let us show \eqref{eq.exponentialgrowth} inductively. Assume that we have proved the result for $M_k$. By the condition \eqref{eq.mildgrowth} and the choice of $L$, we have
    \begin{equation*}
        \inf_{B_{\delta R_k}(x_k)}u\leq C\delta^{-\lambda}k^{2\lambda}\leq 2^{k-2}L\leq M_k/2.
    \end{equation*}
    Then we can apply \eqref{eq.oscilation} to obtain
    \begin{equation*}
        M_{k+1}=\sup_{B_{R_k}(x_k)}u\geq A^{-1}\delta^{-\gamma}\left(\sup_{B_{\delta R_k}(x_k)}u-\inf_{B_{\delta R_k}(x_k)}u\right)\geq A^{-1}\delta^{-\gamma}\left(M_k-M_k/2\right)=2 M_k.
    \end{equation*}
    In particular, this implies $M_{k+1}\geq2^kL$.
    
    Note that $\displaystyle \sup_k M_k\leq \sup_{B_{2}(0)}u<\infty$. This yields a contradiction.
\end{proof}

\begin{corollary}
    Let $u$ be a positive $p$-harmonic function on $B_5(0)$. Then $u$ satisfies the Harnack inequality:
        \begin{equation}\label{eq.harnack51}
            \sup_{B_1(0)}u\leq H_5(p,d)u(0),
        \end{equation}
        where $H_5(p,d)\leq\exp\left( C_5\frac{d}{p-1}\log\frac{d}{p-1}\right) $ and $C_5>0$ is an absolute constant.
\end{corollary}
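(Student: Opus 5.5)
The plan is to apply Lemma \ref{lem.holdertoharnack} to $u/u(0)$. Since the $p$-Laplace equation is homogeneous, $u/u(0)$ is again a positive $p$-harmonic function, so we may assume $u(0)=1$. We must verify \eqref{eq.oscilation} with $A\approx d/(p-1)$ and $\gamma=1/5$, and \eqref{eq.mildgrowth} with $\lambda\lesssim d/(p-1)+1$. The lemma then gives
\[
\sup_{B_1(0)}u\le 4C\exp\Bigl(\lambda\bigl(5\log 4A+2\log 2\lambda\bigr)\Bigr),
\]
and the exponent is $O\bigl(\tfrac{d}{p-1}\log\tfrac{d}{p-1}\bigr)$. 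The additive constants are absorbed into this exponent because $d\ge 2$, so $\tfrac{d}{p-1}$ is bounded below.

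\textbf{Oscillation hypothesis.} The $p$-Laplace equation is invariant under translation and dilation. For $|x|<2$ and $R<3$ we have $\overline B_{R}(x)\subset B_5(0)$ (shrinking $R$ slightly if needed and letting it increase, using continuity). Applying Proposition \ref{thm.holder} to $y\mapsto u(x+Ry)$ gives
\[
\osc(u,B_r(x))\le \frac{10^4 d}{p-1}\Bigl(\frac rR\Bigr)^{1/5}\osc(u,B_R(x)).
\]
So \eqref{eq.oscilation} holds with $A=\max\{1,10^4d/(p-1)\}$ and $\gamma=1/5$. The proof of Lemma \ref{lem.holdertoharnack} only uses \eqref{eq.oscilation} with outer radius $R_k\le 1$ and centers $x_k$ satisfying $|x_k|\le\sum_j j^{-2}<2$, so this range of radii suffices.

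\textbf{Mild growth hypothesis, via a barrier.} This is the step needing real work. Fix $r\le1$ and $|x|<2-r$, and set $m=\inf_{B_r(x)}u$. Consider the radial function $v(y)=|y-x|^{-a}$. A direct computation of the radial $p$-Laplacian shows that $\Delta_p v$ has the sign of $(p-1)(a+1)-(d-1)$. Hence $v$ is $p$-subharmonic whenever $a\ge (d-p)/(p-1)$. Choose
\[
a=\max\Bigl\{\frac{d-p}{p-1},\,1\Bigr\}\le \frac{d}{p-1}+1 .
\]
Taking $a\ge1$ keeps the constant below bounded, which would not happen if $a$ were near $0$ or if $p\ge d$. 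The comparison function is
\[
w=m\,\frac{v-3^{-a}}{r^{-a}-3^{-a}},
\]
which is $p$-subharmonic on the annulus $B_3(x)\setminus \overline B_r(x)\subset B_5(0)$. On the boundary of the annulus, $w\le u$: on the inner sphere $w=m\le u$, and on the outer sphere $w=0<u$. The comparison principle for $p$-harmonic and $p$-subharmonic functions then gives $u\ge w$ in the annulus. The origin lies in the annulus (or in $B_r(x)$, where $u\ge m$ trivially) and satisfies $|x|<2$. Evaluating there,
\[
1=u(0)\ge m\,\frac{2^{-a}-3^{-a}}{r^{-a}}\ge m\,\frac{2^{-a}}{3}\,r^{a},
\]
so $m\le 3\cdot 2^{a}r^{-a}$. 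Thus \eqref{eq.mildgrowth} holds with $\lambda=a$ and $C=3\cdot2^{a}=\exp(O(d/(p-1)))$.

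Inserting these values of $A$, $\gamma$, $\lambda$ and $C$ into the conclusion of Lemma \ref{lem.holdertoharnack} gives $H_5(p,d)\le\exp\bigl(C_5\tfrac{d}{p-1}\log\tfrac{d}{p-1}\bigr)$ for an absolute constant $C_5$. The main obstacle is the barrier step. It requires checking the subharmonicity sign computation and applying the comparison principle for viscosity (equivalently weak) solutions on the annulus. It also requires choosing the exponent $a$ so that both $\lambda$ and $\log C$ remain $O(d/(p-1))$ uniformly in $p>1$, including the regime $p\ge d$.
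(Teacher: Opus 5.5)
Your argument follows the paper's route. You rescale Proposition~\ref{thm.holder} to get \eqref{eq.oscilation} with $A=10^4d/(p-1)$ and $\gamma=1/5$, derive \eqref{eq.mildgrowth} from a fundamental-solution barrier, and apply Lemma~\ref{lem.holdertoharnack}. Your barrier step is more careful than the paper's. Taking $a=\max\{\frac{d-p}{p-1},1\}$ and using $|y-x|^{-a}$ only as a subsolution keeps $\lambda>0$ when $p\ge d$; there the paper's $\lambda=\frac{d-p}{p-1}\le 0$ makes the lemma inapplicable. The bound $1-(2/3)^a\ge 1/3$ also gives a valid constant $C=3\cdot 2^a$, whereas the paper's final inequality $\le 2^\lambda r^{-\lambda}$ loses a factor of order $1/(1-(2/3)^\lambda)$, which is unbounded as $\lambda\downarrow 0$. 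Your remark that the lemma only uses radii $R_k\le 1$ and centres with $|x_k|<\pi^2/6<2$ is needed and correct.

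The one false step is the final absorption. It is not true that $d\ge 2$ makes $s:=d/(p-1)$ bounded below, since $s\to 0$ as $p\to\infty$. A positive lower bound would not suffice anyway: $s\log s\le 0$ for $s\le 1$, and $s\log s\to 0$ as $s\downarrow 1$. In that regime the displayed bound is false. For $s\le 1$ it gives $H_5(p,d)\le 1$. For any fixed $C_5$ it gives $H_5(p,d)<\frac{11}{10}$ once $s$ is close enough to $1$. Yet $u(y)=10+y_1$ is positive and $p$-harmonic on $B_5(0)$ for every $p$, with $\sup_{B_1(0)}u=\frac{11}{10}u(0)$.

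What your constants actually yield is the exponent
\[
\log(4C)+\lambda\bigl(5\log 4A+2\log 2\lambda\bigr)=O\bigl((1+s)\log(2+s)\bigr),
\]
uniformly in $p>1$, and it stays bounded when $s$ is small. This is $\le C_5\, s\log s$ only under a restriction such as $s\ge 2$, with $C_5$ depending on the threshold. You should either state the conclusion as $\exp\bigl(C_5(1+s)\log(2+s)\bigr)$ or assume $d/(p-1)\ge 2$. The paper's statement silently carries the same restriction.
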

 \begin{proof} 

 For $p$-harmonic functions in $B_5(0)$, The hypothesis \eqref{eq.oscilation} is verified in Proposition \ref{thm.holder} with $A=\frac{10^4d}{p-1}$ and $\gamma=\frac{1}{5}$.
 
 As noted in \cite[Section 4]{LPSharnack}, the hypothesis \eqref{eq.mildgrowth} with $C=2^{\frac{d}{p-1}}$ and $\lambda=\frac{d-p}{p-1}$ follows from a comparison with fundamental solutions. For convenience, we recall the argument.  Fix $z$ and $r$ with $|z|+r\leq 2$. We may assume $0\notin B_r(z)$ since otherwise the estimate automatically holds. Consider the fundamental solution 
\begin{equation*}
    v(x)=\frac{|x-z|^{-\lambda}-3^{-\lambda}}{|z|^{-\lambda}-3^{-\lambda}}
\end{equation*}
on $B_3(z)\setminus B_r(z)$. If the inequality $u \ge v$ held on $ \partial B_r(z) $, then the maximum principle would imply that $u(0)\geq v(0)=2$, which is a contradiction. Therefore,  
\begin{equation*}
    \inf_{\partial B_r(z)} u(x)\leq 2\frac{r^{-\lambda}-3^{-\lambda}}{|z|^{-\lambda}-3^{-\lambda}}\leq 2^{\lambda}r^{-\lambda}.
\end{equation*}
\end{proof}

Once we have a Harnack inequality for fixed balls $B_1\subset B_5$, we immediately obtain a Harnack inequality for two balls of general radii $B_r\subset B_R$. This deduction is standard, but we include it   for convenience. It suffices to consider two balls $B_1\subset B_R$.
\begin{corollary}\label{cor.quantitative1}
    There exist universal constants $\gamma,C_*>0$ such that the following holds. Let $u$ be a positive $p$-harmonic function on $B_R(0)\subset\Rd$   with $u(0)=1$.  
    \begin{enumerate}

        \item   If $1<R <5$, then $\displaystyle \sup_{B_1(0)}u\leq\exp\left(C_*\frac{d}{p-1}\log\frac{d}{p-1}\log \frac{R}{R-1}\right).$
\smallskip
        \item If  $R \geq 5$,  then   $ \displaystyle \sup_{B_1(0)} u \leq1+ R^{-\gamma}\exp\left(C_*\frac{d}{p-1}\log\frac{d}{p-1}\right).$
    \end{enumerate}
    \end{corollary}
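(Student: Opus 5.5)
For part (a) I plan a chaining argument using the previous corollary (the $B_1\subset B_5$ Harnack inequality with constant $H_5:=\exp(C_5\frac{d}{p-1}\log\frac{d}{p-1})$) together with scaling and translation invariance of $p$-harmonicity. If $u$ is $p$-harmonic on $B_\rho(z)$, then $w(x)=u(z+\rho x/5)$ is $p$-harmonic on $B_5(0)$. Applying the corollary to $w$ gives $\sup_{B_{\rho/5}(z)}u\le H_5\,u(z)$. By the same argument applied to $u$ centred at a point $z'\in B_{\rho/5}(z)$, we also get $u(z)\le H_5 u(z')$, so $\inf_{B_{\rho/5}(z)}u\ge H_5^{-1}u(z)$.

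Fix $x$ with $|x|<1$ and join $0$ to $x$ along the segment. Points $z$ at distance $t$ from the origin lie in a ball $B_{R-t}(z)\subset B_R(0)$. I would choose points $z_0=0,z_1,\dots,z_N=x$ with $|z_{j+1}-z_j|\le (R-|z_j|)/5$ (intermediate points may be taken slightly beyond $|x|$ if convenient, but staying inside $B_{(1+R)/2}$ is not needed). Each step then contributes a factor $H_5$, so $u(x)\le H_5^N$.

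The number of steps is the key count. Since the distance to $\partial B_R$ shrinks geometrically only until it reaches $R-1$, one gets $N\lesssim \log\frac{R}{R-1}+1$. Indeed, with steps of size $(R-|z_j|)/5$, the quantity $R-|z_j|$ is multiplied by $4/5$ each step until $|z_j|$ reaches $1$. Since $R<5$, the $+1$ is absorbed into the $\log\frac{R}{R-1}\ge\log\frac54$ term after enlarging the constant. This gives (a) with $C_*$ a multiple of $C_5$. The only care needed is to treat $d/(p-1)$ small, e.g.\ replacing $\log\frac{d}{p-1}$ by $\max(1,\cdot)$, or noting $d\ge2$, $p$ bounded conventions. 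I expect the paper glosses over this, and I would as well, since $H_5\ge1$ anyway.

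For part (b), $R\ge5$, I plan to combine Proposition~\ref{thm.holder} with part (a). Applying Proposition~\ref{thm.holder} to $u(R'x)$ with $R'=R/2$ (so $\overline B_{R'}\subset B_R$) and $\delta=1/R'$ gives
\[
\osc(u,B_1)\le \frac{10^4 d}{p-1}\,(2/R)^{1/5}\,\osc(u,B_{R/2}).
\]
Since $u>0$, $\osc(u,B_{R/2})\le\sup_{B_{R/2}}u$. Applying part (a) (scaled by $R/2$, on $B_{R/2}\subset B_{R}$, ratio $2$) bounds this by $\exp(C\frac{d}{p-1}\log\frac{d}{p-1})u(0)$. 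Then $\sup_{B_1}u\le u(0)+\osc(u,B_1)$, and the prefactor $\frac{10^4d}{p-1}\,2^{1/5}$ is absorbed into the exponential by enlarging $C_*$. This yields (b) with $\gamma=1/5$.

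The main obstacle is the bookkeeping in part (a): making the chain count scale like $\log\frac{R}{R-1}$ rather than like $1/(R-1)$. This is why steps proportional to the distance to the boundary are essential. A second, smaller issue is the degenerate regime where $\frac{d}{p-1}\log\frac{d}{p-1}$ is not bounded below, which is handled by the max convention.
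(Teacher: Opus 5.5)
Your proposal is correct and follows essentially the paper's argument. For (a) it is a Harnack chain along the radius, each step one fifth of the distance to $\partial B_R$; the paper runs it inward from $x$ and finishes with $H_5$ on $B_{R/5}$, while you run it outward from $0$. For (b) it is the H\"older estimate plus a fixed-ratio Harnack bound; the paper uses scale $R/5$ and $H_5$ directly instead of scale $R/2$ and part (a), and absorbs the factor $\frac{10^4 d}{p-1}$ into the exponential in the same way, which, as you rightly flag, needs $\frac{d}{p-1}$ bounded below.

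One side claim is not justified: $\inf_{B_{\rho/5}(z)}u\ge H_5^{-1}u(z)$ does not follow from a single application, because a ball of radius $\rho$ about $z'$ need not lie in the domain. It is harmless, though, since the chain only ever uses $u(z_{j+1})\le H_5\,u(z_j)$.
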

\begin{proof}
     Recall that we proved $H_5(p,d)\leq\exp\left( C\frac{d}{p-1}\log\frac{d}{p-1}\right)$ in \eqref{eq.harnack51}. 
     \begin{enumerate}
   \item Given $x\in B_1(0)$, we write $x_0=x$ and denote $d_n=\left(\frac{5}{4}\right)^{n} (R-|x_0|)$ for all $n \ge 0$. We can inductively define   $x_n=\frac{R-d_{n}}{R-d_{n-1}}x_{n-1}$ so that $|x_n|=R-d_n$. We stop when $\frac{5}{4}d_N>R$. In particular, the procedure terminates  in at most $\log_{\frac{5}{4}}\frac{R}{R-1}\le 5 \log \frac{R}{R-1} $ steps. Moreover, the ending point   $x_N\in B_{R/5}(0)$  satisfies the Harnack estimate $u(x_N)\leq H_5(p,d)$.

    Note that $|x_{n-1}-x_n|=d_{n-1}$ and $d(x_n,\partial B_R(0))=d_n$. Applying \eqref{eq.harnack51} to $B_{d_{n-1}}(x_n)\subset B_{d_n}(x_n)$, we obtain
    \begin{equation*}
        u(x_{n-1})\leq H_5(p,d)u(x_n).
    \end{equation*} 
    Iterating this step, we obtain
    \begin{equation}
        u(x)\leq H_5(p,d)^{5\log \frac{R}{R-1}}.
    \end{equation}
\smallskip
    \item By the H\"older oscillation estimate \eqref{eq.holder} and   \eqref{eq.harnack51}, we have
    \begin{equation*}
        \sup_{B_1(0)}u-1\leq \osc(u, B_1(0))\leq \frac{Cd}{p-1}\left(\frac{5}{R}\right)^\gamma\osc (u,B_{\frac{R}{5}}(0))\leq \frac{Cd}{p-1}\left(\frac{5}{R}\right)^\gamma H_5(p,d),
    \end{equation*}
    which implies (a) provided $C_1$ is large enough. 
    \end{enumerate}
\end{proof}
\section{Discussion of the Harnack constant  obtained by different methods }\label{sec:discussion}

In Theorem \ref{thm.harnack}, we prove a Harnack constant of the form $H_R(p,d)\leq \exp\left(C_R\frac{d}{p-1}\log\frac{d}{p-1}\right)$. In this section, we discuss the constant in some other approaches.

The Moser iteration approach yields a bound of the form $H_R(p,d)\leq \exp(\exp(C_R(p) d))$ in \eqref{eq.harnack}. To see this, one may follow the proof presented in \cite[Section 3.3]{Lindqvistbook}.   Lemma 2.14 in that book yields
\begin{equation*}
    \int_{B_r}|\nabla \log v|^p\leq C(p) r^{-p}(2r)^{d}\omega_d
\end{equation*}
for positive $p$-superharmonic function in $B_{2r}$, where $\omega_d$ is the volume of $d$-dimensional unit ball and $C(p)$ only depends on $p$. Then Theorem 3.13 there gives
\begin{equation*}
    \left(\fint_{B_r} u^\epsilon\right)^{1/\epsilon}\leq4^{1/\epsilon}\left(\fint_{B_r} u^{-\epsilon}\right)^{-1/\epsilon}
\end{equation*}
with $\epsilon=\tilde{C}(p) 2^{-d}$. This gives the double exponential constant in the Moser iteration method.

The proof of Harnack's inequality in \cite{LPSharnack} relies on Lemma 4.1 there.   If one tracks the constants obtained there, the resulting constant in  Harnack's inequality is $H_R(p,d)\leq\exp\left(\tilde{C}_R\left(\frac{d}{p-1}\right)^2\right))$. Our Lemma \ref{lem.holdertoharnack}    improves the above lemma by a more careful choice of the sequence of radii to obtain the constant in \eqref{eq.harnack}.

\begin{remark}
    In   \cite{LPregular}, Luiro and Parviainen  prove the H\"older estimate for tug-of-war game in Section 6. Their argument does not couple the noises in the two processes, so they obtain and   a larger constant of $C\left(\frac{d}{p-1}\right)^2$ in the H\"older estimate \eqref{eq.holder}. We improve the bound to $\frac{Cd}{p-1}$. However,  this refinement does not affect the order of the constant in the Harnack inequality.
\end{remark}

Our probabilistic approach yields an improved  dependence on $d$. However, based on the classical case $p=2$, it is natural to expect that for all $p>1$, the optimal Harnack constant is  of order  $\Theta(\exp(C_pd))$ as $d\rightarrow\infty$.  %This can be easily seen from the Poisson kernel for the ordinary Laplacian. 
For  $p \ne 2$, our proof  still requires an additional $ \log d $ factor in the exponent.

As noted in the introduction, when $d=2$, we can use planar geometry to prove the Harnack inequality directly, which yields a better dependence of the Harnack constant  on $p$. 
\begin{proposition}[Restating Proposition \ref{prop.planar0}]\label{prop.planar}
    Let $u$ be a positive $p$-harmonic function on $B_R(0)\subset\R^2$ where $R>1$, then $u$ satisfies the Harnack inequality \eqref{eq.harnack} with $H_R(p,2)\leq\exp\left(\frac{C_R}{p-1}\right)$.
\end{proposition}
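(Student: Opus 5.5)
We would argue directly with level sets, using planar topology, rather than through Proposition~\ref{thm.holder} and Lemma~\ref{lem.holdertoharnack}. Normalize $u(0)=1$, fix $R'\in(1,R)$ (say $R'=(1+R)/2$), and let $x\in B_1(0)$ with $u(x)=M$. By the strong maximum principle, the connected component $K$ of $\{u\ge M\}\cap \overline B_{R'}(0)$ containing $x$ cannot be compactly contained in $B_{R'}(0)$. Otherwise $u$ would be $p$-harmonic on an open set whose boundary values are $\le M$, and its maximum $M$ would be attained inside. Hence $K$ is a continuum joining $\overline B_1(0)$ to $\partial B_{R'}(0)$, and in particular $K$ meets every circle $\partial B_s(0)$ with $1\le s\le R'$. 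This is the planar-geometry input: in $d=2$ such a continuum cannot be avoided by the annulus $\{1<|y|<R'\}$, which has no analogue in higher dimension.

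Next we compare $u$ with the $p$-harmonic measure of $K$. Let $\omega$ be the $p$-harmonic function in $\Omega:=B_{R'}(0)\setminus K$ with boundary values $1$ on $K$ and $0$ on $\partial B_{R'}(0)$; if $0\in K$ there is nothing to prove. Since $u\ge M\omega$ on $\partial\Omega$ and $u>0$, the comparison principle gives $1=u(0)\ge M\,\omega(0)$. So $M\le 1/\omega(0)$, and everything reduces to a uniform lower bound
\begin{equation*}
\omega(0)\ \ge\ \exp\Big(-\frac{C_R}{p-1}\Big),
\end{equation*}
valid for \emph{every} continuum $K$ connecting $\overline B_1(0)$ to $\partial B_{R'}(0)$ and not containing $0$.

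To get this bound we plan to use the tug-of-war representation of Section~2 with $d=2$. There the noise is one-dimensional: $\pm\sqrt{1/(p-1)}\,\epsilon$ perpendicular to the chosen move. Since $\omega_\epsilon(0)$ is a sup--inf, it suffices to exhibit one strategy for player I which, against any strategy of player II, makes the token hit $K$ before $\partial B_{R'}(0)$ with probability at least $\exp(-C_R/(p-1))$. The strategy we would try first has two phases. In the first phase player I pulls radially outward until the token reaches $|X|=\rho:=(1+R')/2$. In the second phase player I pulls tangentially, always in the same rotational direction. In the tangential phase the noise is radial, so $|X_n|$ behaves like a one-dimensional walk with step size $\sim\epsilon/\sqrt{p-1}$. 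Meanwhile the angle gains on average only when player I wins the coin toss, so it does not by itself have a drift. Because $K$ meets $\partial B_\rho(0)$, the token hits $K$ as soon as its angular coordinate makes a full turn while $|X_n|$ stays in $(1,R')$. One natural way to quantify this is a radial barrier of the form $\phi(|y|)$, combined with the standard bound for a one-dimensional walk staying in an interval of size $\sim R'-1$ during the time needed to wind once. This is the point where we expect an exponent of order $1/(p-1)$, or perhaps $\log\frac{1}{p-1}$ after optimization. An alternative is to use Aronsson-type separable $p$-harmonic functions $r^{k}f(\theta)$ in a sector as explicit subsolutions.

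The main obstacle is this last step: making the winding probability uniform in $K$ and in player II's strategy, since player II can spend all its wins undoing player I's rotation. If the tangential strategy does not work, we would instead let player I pull toward the nearest point of $K\cap\partial B_\rho(0)$ along the arc of $\partial B_\rho(0)$ and use a comparison function depending only on the angular distance. This loses only a constant factor (depending on $R$) in each of $O(1)$ angular sectors, and that would give the stated $\exp(C_R/(p-1))$. The passage $\epsilon\to0$ is then done exactly as in the proof of Proposition~\ref{thm.holder}.
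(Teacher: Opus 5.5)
\textbf{There is a genuine gap.} Your reduction is sound and parallels the paper's use of the level set $\{u=\sup_{B_1}u\}$: the continuum $K$ comes from the maximum principle, and $u\ge M\omega$ follows by comparison. But the whole content of the proposition is the uniform bound $\omega(0)\ge e^{-C_R/(p-1)}$. You do not prove it, and the steps you sketch for it would fail as stated.

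First, the claim that the token hits $K$ once its angle makes a full turn inside $\{1<|y|<R'\}$ is false. $K$ may be a spiral winding many times from $\partial B_1(0)$ to $\partial B_{R'}(0)$, and a path running along the complementary spiral channel winds arbitrarily often without touching $K$. What forces an intersection with every continuum joining $\overline B_1(0)$ to $\partial B_{R'}(0)$ is that the trajectory \emph{contains a closed curve surrounding $B_1(0)$}; a winding number alone is not enough.

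The same objection kills your fallback. Crossing the ray through a point $z\in K\cap\partial B_\rho(0)$ at a different radius need not meet $K$, so a comparison function of the angular distance to $z$ cannot detect $K$. Nor can one force the token onto $z$ itself, since a point has zero $p$-capacity in the plane for $p\le 2$.

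Second, ``always pull tangentially'' is not uniform in player II. Suppose player II always pulls radially outward. Then the radius gains about $\epsilon/2$ per step, and the token leaves $B_{R'}(0)$ after about $2(R'-\rho)/\epsilon$ steps. Only about $(R'-\rho)/\epsilon$ of those are wins for player I, whereas a full turn needs about $2\pi\rho/\epsilon$ of them. Compensating requires a large deviation of the coin tosses or of the noise, whose probability tends to $0$ as $\epsilon\to0$. Player I must also fight the opponent in the radial direction, which your strategy does not do.

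The paper supplies exactly these two missing ingredients.
\begin{itemize}
\item It fixes a chain of seven rectangles $Q_1,\dots,Q_7\subset B_4(0)$ with target edges $\Gamma_i$. By planar geometry they are arranged so that any concatenation of curves crossing $Q_i$ from $\Gamma_{i-1}$ to $\Gamma_i$ must contain a closed curve around $B_1(0)$.
\item It uses the rectangle-crossing estimate \cite[Lemma 3.1]{PWboundaryderivative}: from an $\alpha\epsilon$-neighborhood of $\Gamma_{i-1}$, a player can make the token leave $Q_i$ through $\Gamma_i$ with probability at least $\exp(-C_i/(p-1))$, whatever the opponent does.
\end{itemize}
Multiplying the seven bounds produces the loop with probability $\exp(-C/(p-1))$.

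The paper also plays the game for $u$ itself in the ball $B_4(0)$ and stops at the hitting time of the $\alpha\epsilon$-neighborhood of the level set. So it needs uniform convergence $u_\epsilon\to u$ only in a ball. Your detour through $\omega$ would additionally require justifying the tug-of-war approximation in the rough domain $B_{R'}(0)\setminus K$, which is avoidable.
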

\begin{proof}
    By the standard argument in Corollary \ref{cor.quantitative1}, it suffices to fix $R=5$. 

    First consider the tug-of-war with noise in $B_4(0)$ with boundary condition $F=u$ on $\partial B_4(0)$. Our first step is to show: starting at $0$, there exists a strategy such that with probability $\exp\left(-\frac{C}{p-1}\right)$, the trajectory of the process will contain a closed curve surrounding $B_1(0)$. To be more precise, if we denote the process by $(X_n)$, we use \textbf{trajectory} to call the piecewise straight curve connecting each $X_{n}$ with $X_{n-1}$. We use \cite[Lemma 3.1]{PWboundaryderivative} to construct the desired strategy.

    We claim that we can construct a finite sequence $(Q_n,\Gamma_n)_{n=1}^7$, where $Q_n$ is a (closed) rectangle contained in $B_4$ and $\Gamma_n$ is one of the four edges of $R_n$, called the target edge of $Q_n$. They satisfy the following conditions:
    \begin{enumerate}
        \item $0\in \mathrm{Int}Q_1$. Moreover, for each $n$, $\Gamma_{n}\subset \mathrm{Int} Q_{n+1}$.
        \item For $n\geq 2$, the rectangle $Q_n$ does not intersect $B_1(0)$.
        \item For any choice of curve $\gamma=\cup_{i=2}^N \gamma_i$, where each $\gamma_i\subset Q_i$ is a simple curve, joining some $x_{i-1}\in\Gamma_{i-1}$ and ${x_i}\in \Gamma_{i}$, the curve $\gamma$ must contain a closed curve surrounding $B_1(0)$.
    \end{enumerate}
    This is a direct consequence of planar geometry. See Figure \ref{fig:rectangles} for an example.

    \begin{figure}[!h]
        \centering
        \includegraphics[width=0.7\linewidth]{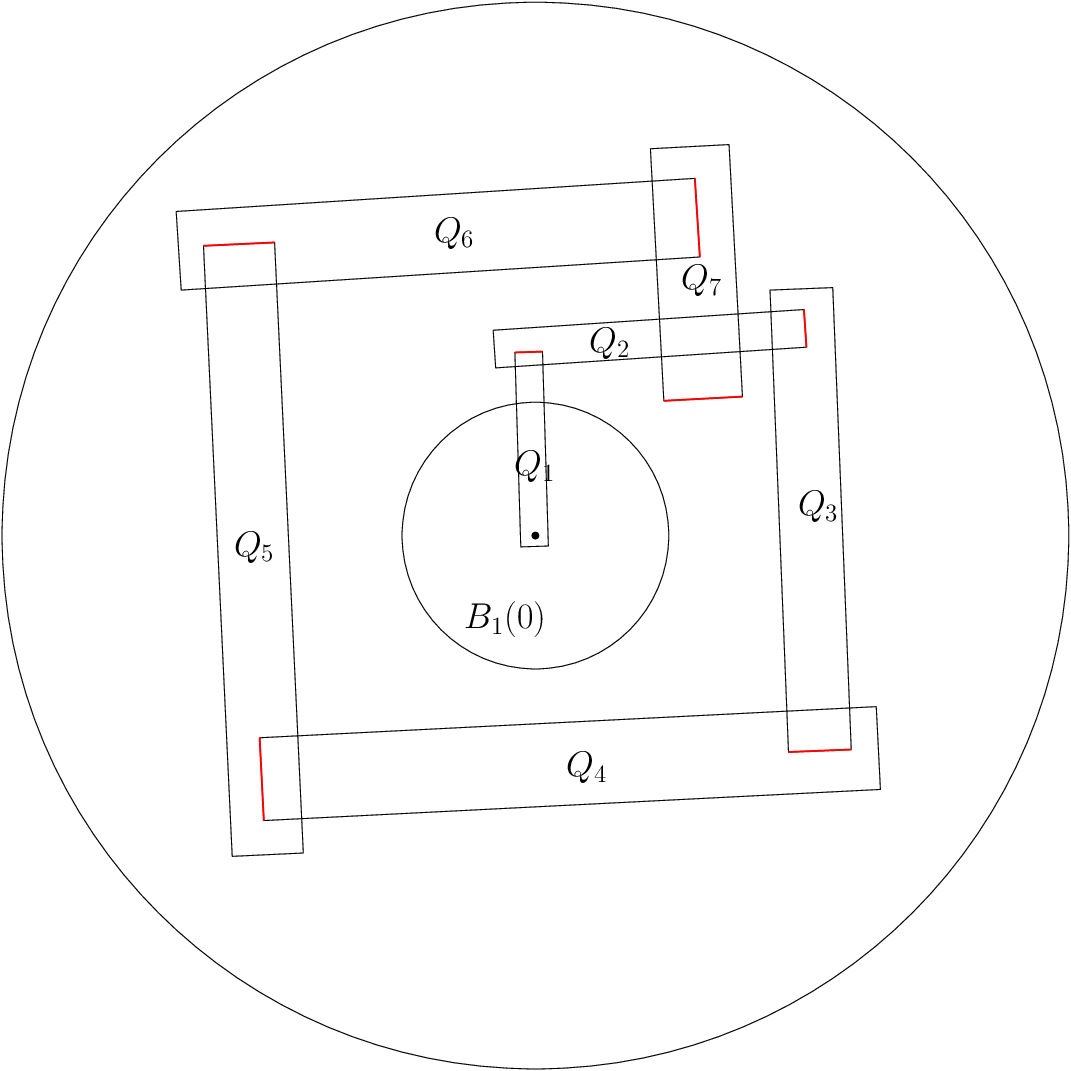}
        \caption{Rectangles $R_k$ described in the proof. The target edge of the rectangle is indicated by red lines.}
        \label{fig:rectangles}
    \end{figure}

    Fix $p$ and choose $\epsilon$ to be sufficiently small. We may apply \cite[Lemma 3.1]{PWboundaryderivative} to each of the rectangles and obtain the following: there exists a constant $C_i$ independent of $p$ such that, starting from any point $x_{i-1}$ in a $\alpha\epsilon-$neighborhood of $\Gamma_{i-1}$, one player has the strategy to ensure the process leaves $Q_i$ from the target edge with probability $\exp\left(-\frac{C_i}{p-1}\right)$.

    Combining the estimate in all rectangles, we know that: with probability $\exp\left(-\frac{C}{p-1}\right)$, player I can ensure that the trajectory contains a closed curve $\gamma$ surrounding $B_1(0)$. 
    
    Now, we consider the level set $E:=\{y\in B_4(0) : u(y)=\sup_{B_1(0)} u\}$. By the maximum principle and the continuity of $u$, we know that $E$ must intersect $\gamma$. In particular, if we define $\tau=\inf\{n\geq 0: d(X_n,E)< \alpha\epsilon\}$, where $\alpha=\frac{1}{\sqrt{p-1}}+1$, then we have that, for any strategy $S_{II}$,
    \begin{equation}\label{eq.hittingprobability}
        \P_{S_{I},S_{II}}[\tau<\infty]\geq\exp\left(-\frac{C}{p-1}\right)
    \end{equation} 
    under the strategy $S_{I}$ described above. According to \eqref{eq.martgeq}
    \begin{align*}
        u_\epsilon (0)\geq \inf_{S_{II}}\E_{S_I,S_{II}} [u_\epsilon (X_{\tau\wedge n})] \geq \inf_{S_{II}}\E_{S_I,S_{II}}[u_\epsilon (X_{\tau\wedge n})\mathbf 1_{\tau<\infty}].
    \end{align*}
    Sending $n$ to $\infty$, we have
    \begin{align*}
        u_\epsilon (0) &\geq \inf_{S_{II}}\E_{S_I,S_{II}}[u_\epsilon (X_{\tau})\mathbf 1_{\tau<\infty}]
        \\&\geq \inf_{S_{II}}\E_{S_I,S_{II}}[u (X_{\tau})\mathbf 1_{\tau<\infty}]-\sup_{x\in B_4(0)}|u(x)-u_\epsilon(x)|
        \\&\geq \inf_{S_{II}}\P_{S_I,S_{II}}[\tau<\infty]\sup_{B_1(0)}u-\sup_{x\in B_4(0)}|u(x)-u_\epsilon(x)|-\sup_{x,y\in B_4(0),\ d(x,y)<\alpha\epsilon}|u(x)-u(y)|.
    \end{align*}
    Here the first term can be bounded below by \eqref{eq.hittingprobability}. The second term tends to $0$ as $\epsilon \downarrow 0$, since $u_\epsilon$ converges uniformly to $u$. The third term tends to $0$ due to the continuity of $u$.  We conclude that
    \begin{align*}
        u(0) \geq \exp\left(-\frac{C}{p-1}\right)\sup_{B_1(0)}u.
    \end{align*}
\end{proof}
%We leave the proof of optimal Harnack constant $\exp(\frac{Cd}{p-1})$ for future discussions.

%Regarding the dependence on $p$ as $p\downarrow1$, this bound is still not good enough.% In dimension $2$, one can use some planar geometry to obtain a Harnack inequality with constant $O(\frac{1}{p-1})$ using Lemma~\ref{lem.cylinderlowerbound}.

\section*{Acknowledgements}
We are grateful to J. C. Mourrat and D. Pechersky for useful discussions. The research of Y. Peres and H. Wang is supported by the National Natural Science Foundation of China RFIS grant No. W2531011.
In addition, the research of H. Wang is supported by the National Natural Science Foundation of China (Grant Nos. 12595284, 12595280).

\bibliographystyle{abbrv}
\bibliography{Hopfref}

\end{document}